\documentclass[12pt,a4paper,twoside]{amsart}
\usepackage{amsfonts, amsthm, amsmath, amssymb}
\usepackage{mathrsfs,amsmath}
\usepackage{hyperref}
\usepackage{bbm}
\hypersetup{colorlinks=false}

\usepackage[margin=2.9cm]{geometry}

\usepackage{helvet}

\usepackage{amsthm}
\usepackage{lineno}
\newtheorem{theorem}{Theorem}
\newtheorem{lemma}{Lemma}[section]
\newtheorem{remark}{Remark}

\newtheorem{proposition}{Proposition}

\begin{document}

\author{ Aritra Ghosh }
\title{Weyl-type bounds for twisted GL(2) short character sums }

\address{Aritra Ghosh \newline  Stat-Math Unit, Indian Statistical Institute, 203 B.T. Road, Kolkata 700108, India; email: aritrajp30@gmail.com}

\begin{abstract}
Let $f$ be a Hecke-Maass or holomorphic primitive cusp form of full level for $SL(2,\mathbb{Z})$ with normalized Fourier coefficients $\lambda_{f}(n)$. Let $\chi$ be a primitive Dirichlet character of modulus $p$, a prime. In this article we shorten the range of cancellation for $N$ in the twisted $GL(2)$ short character sum. Here we consider the problem of cancellation in short character sum of the form 
$$S_{f,\chi}(N):= \mathop\sum_{n \in \mathbb{Z}}\lambda_{f}(n)\chi(n)W\left(\frac{n}{N}\right) .$$ 
We show that, for $0<\theta < \frac{1}{10}$,
$$S_{f,\chi}(N) \ll_{f,\epsilon}N^{3/4 + \theta /2}p^{1/6}(pN)^{\epsilon} + N^{1-\theta }(pN)^{\epsilon},$$
which is non-trivial if $N \geq p^{2/3 + \alpha + \epsilon}$ where $\alpha = = \frac{4\theta}{1-6\theta}$. Previously such a bound was known for $N \geq p^{3/4 + \epsilon}.$

\end{abstract}

\maketitle

\section{Introduction}
A problem which arises in a variety of contexts is the cancellation in sums of the form
\begin{equation} \label{eq3}
S_{\chi}(N)=\mathop\sum_{n \in \mathbb{Z}}\chi(n)W\left(\frac{n}{N}\right),
\end{equation}
and
\begin{equation} \label{eq4}
S_{f,\chi}(N) = \mathop\sum_{n \in \mathbb{Z}}\lambda_{f}(n)\chi(n)W\left(\frac{n}{N}\right),
\end{equation} 
where $\chi$ is a character of conductor $p$, $\lambda_{f}(n)$'s are normalized Fourier coefficients of a Hecke-Maass or holomorphic primitive cusp form $f$ for $SL(2,\mathbb{Z})$ and $W$ is a smooth bump function supported on $[1,2]$.

By applying the Mellin inversion formula
$$W(x) = \frac{1}{2 \pi i} \int_{(\sigma)} \tilde{W}(s) x^{-s} ds, \,  \sigma >1 ,$$
we see that the equation \eqref{eq3} becomes
\begin{equation}\label{eq5}
S_{\chi}(N) = \frac{1}{2\pi i} \mathop\int_{(\sigma)} N^{s} \widetilde{W}(s)L(s, \chi )ds,
\end{equation}
where $L(s,\chi )$ is the Dirichlet $L$-function. We can shift the contour to the central line $\sigma = \frac{1}{2}$. As the Mellin transform $\tilde{W}(s)$ decays rapidly on the vertical line, the main contribution to the integral comes from the points near the center $\sigma = \frac{1}{2}$.

For example plugging in a bound
$$| L(\frac{1}{2}+ it , \chi)| \ll p^\epsilon (2 + |t|)^A ,$$
we get 
$$S_{\chi}(N) \ll \sqrt{N}p^\epsilon.$$

In particular if we take the convexity bound
 $$L( 1/2 + it , \chi)\ll_{\epsilon} p^{1/4 + \epsilon}{(2+ |t|)}^{1/4 + \epsilon },$$
then we conclude that $S_{\chi}(N) \ll N^{1/2} p^{1/4} p^{\epsilon}$ which is non-trivial if and only if $N > p^{1/2 + \epsilon}$. Here one can note that the convexity bound recovers the conclusion of the Polya-Vinogradov inequality. Hence subconvexity corresponds to cancellation in shorter sums. D. A. Burgess (see \cite{B}) proved that $L( 1/2 , \chi )\ll_{\epsilon} p^{3/16 + \epsilon}$ which yields a non-trivial bound if and only if $N > p^{3/8 + \epsilon}$. Burgess's method required new ideas, in particular it uses the Riemann Hypothesis for curves over finite fields. Note that the Burgess exponent of $3/16$ falls short of the exponent $1/6$ found by H. Weyl. However Burgess's method yields a non-trivial bound for $S_{\chi}(N)$ for any $N \gg p^{1/4 +\epsilon}$ if $p$ is cube free (especially for primes). This does not come through the passage to $L$-functions as we have sketched above. But this basic idea is applicable to the scenarios as well, invoking higher rank harmonics. Recently I. Petrow-M. Young (see \cite{IP}) proved a Weyl-exponent subconvex bound for any Dirichlet $L$-function of cube-free conductor. They also got a bound of the same strength for certain $L$-functions of self-dual $GL(2)$ automorphic forms that arise as twists of forms of smaller conductor. One can  also see the recent work of P. Nelson (see \cite{PN}). Curiously, the exponent $3/16$ often re-occurs in the modern incarnations of these problems, see \cite{KRY}, \cite{BH}, \cite{BHM}, \cite{VB}, \cite{FK}, \cite{Wu1}, \cite{Wu2} as examples. Also related work on the Burgess type bounds can be found in the paper of R. Munshi (see \cite{Mun2}). For the case of Dirichlet $L$-functions, the Burgess bound has only been improved in some limited special cases. In a breakthrough, B. Conrey and H. Iwaniec (see \cite{CI}) obtained a Weyl quality bound for quadratic characters of odd conductor using techniques from automorphic forms and P. Deligne’s solution of the Weil conjectures for varieties over finite fields. Another class of results, such as  \cite{BLT} and \cite{HB}, consider situations where the conductor $q$ of $\chi$ runs over prime powers or otherwise has some special factorizations. Notably, D. Milic\'evi\'c (see \cite{MIL}) recently obtained a sub-Weyl subconvex bound when $q = p^n$ with $n$ large. Also for the Weyl bound for short twisted sums in the prime power case one can look the papers by V. Blomer and D. Mili\'cevi\'c (see \cite{ASE}) and a related paper of R. Munshi and S. K. Singh (see \cite{ANT}). Here a subconvex bound of the form $L(1/2, \pi) \ll_{\epsilon} Q(\pi)^{1/6 + \epsilon }$ is the Weyl bound, where $Q( \pi )$ is the analytic conductor of the automorphic $L$-function $L(1/2, \pi)$. The Weyl bound is only known in a few cases, notably for quadratic twists of certain self-dual $GL(2)$ automorphic forms; see \cite{CI}, \cite{IV}, \cite{PY1},\cite{Y1}.

 For the $GL(2)$ case we have the following:
 \begin{equation}\label{eq7}
 S_{f,\chi}(N) = \frac{1}{2\pi i} \mathop\int_{(\sigma)} N^{s} \widetilde{W}(s)L(s, f\otimes \chi )ds.
 \end{equation}
 Again by shifting the contour to the central line $\sigma = 1/2$, as $\tilde{W}(s)$ decays rapidly on the vertical line, the main contribution to the integral comes from the points near the contour $\sigma = 1/2$. Similarly plugging in a bound
 $$|L(\frac{1}{2}+ it, f \otimes \chi)|\ll p^\epsilon (3+ |t|)^A,$$
 we have
 \begin{equation}\label{eq8}
  S_{f,\chi}(N) \ll_{f,\epsilon} \sqrt{N}p^{\epsilon}.
 \end{equation}
 In this context, the convexity bound is $L(\frac{1}{2}+ it,f\otimes \chi )\ll_{f,\epsilon} p^{\frac{1}{2}+\epsilon}(3+|t|)^{\frac{1}{2}+\epsilon}$. So we have $S_{f,\chi}(N) \ll_{f,\epsilon} \sqrt{N}p^{\frac{1}{2}+\epsilon}$ which becomes non-trivial if and only if $N> p^{1+ \epsilon }$. Further improvement can be done. By the Burgess exponent (see \cite{VB}) we have 
 $$L(\frac{1}{2},f\otimes \chi ) \ll_{f,\epsilon} p^{3/8 + \epsilon}.$$
 Hence we have 
  $$S_{f,\chi}(N)\ll_{f,\epsilon} \sqrt{N}p^{\frac{3}{8} + \epsilon} < N \iff N > p^{3/4 + \epsilon}$$ 
  which may be called the Burgess range.

In this paper we will analyse the sum $S_{f,\chi}(N)$ using a version of $\delta$-method, without going into $L$-functions. Our method improves the range of cancellation from $N > p^{3/4 + \epsilon}$ (Burgess range) to $N > p^{2/3 +\epsilon}$ (Weyl range). Here we get the following result:

\begin{theorem}\label{th}
Let f be a Hecke-Maass or holomorphic primitive cusp of form full level for $SL(2,\mathbb{Z})$.  Let $\chi$ be a primitive Dirichlet character of modulus $p$, a prime. Then for any $\epsilon > 0$ and $0<\theta < \frac{1}{10}$ we have 
$$S_{f,\chi }(N)\ll_{f,\epsilon}N^{3/4 + \theta /2}p^{1/6}(pN)^{\epsilon} + N^{1-\theta }(pN)^{\epsilon},$$
which becomes non-trivial if  $p^{\frac{2}{3}+ \alpha + \epsilon}\leq N \leq p$, where $\alpha = \frac{4\theta}{1-6\theta}$.
\end{theorem}

$\hspace{-0.40 cm}$Though this result is implicit in the paper of R. Munshi (see \cite{Mun}), we are doing here explicitly. Actually in that paper (see \cite{Mun}) his aim is to get a subconvexity bound for $L(1/2 + it, f \otimes \chi)$ but here our aim is to get a range for $N$ to have a non-trivial bound or more precisely getting cancellation in our twisted $GL(2)$ short character sum. Here we are using the same strategy and ideas developed in the paper of R. Munshi (see \cite{Mun}). We will only present the case of holomorphic cusp forms for $SL(2,\mathbb{Z})$ as for the Maass forms one can see R. Munshi's paper (see \cite{Mun}) which carried out the Maass form case in details. The case for Maass forms is just similar as we only need the Ramanujan bound in the $L^2$-sense.

\begin{remark}
 Here we are considering $p$ to be a prime number for simplicity but also one can do for $p$ when p is not a prime (one has to handle coprimality issue carefully) using the same method.
\end{remark}

 \subsection{Sketch of the proof} We shall describe our method briefly by taking $f$ to be a holomorphic primitive cusp form for $SL(2,\mathbb{Z})$. Here we are using the method of R. Munshi (see \cite{Mun}). At first we consider the sum 
 $$\mathbf{S}:= \mathop\sum_{n \sim N}\lambda_{f}(n)\chi(n),$$
 for $N > p^{\frac{2}{3}+\epsilon}$ for some $\epsilon > 0$, where $\lambda_{f}(n)$'s are the normalized Fourier coefficients of $f$ and $p$ is the conductor of $\chi$. Here in the sketch we will suppress the weight function for notational simplicity. Then we write this sum as 
 $$\mathbf{S}= \mathop{\sum\sum}_{n,m \sim N}\lambda_{f}(n)\chi (m)\delta_{n,m},$$
 where $\delta_{n,m}$ is the Kronecker $\delta$-symbol. Here to get an inbuilt bilinear structure in the circle method itself, we need to use a more flexible version of the circle method - the one investigated by M. Jutila (see \cite{PA}, \cite{MZ}). This version comes with an error term which is satisfactory, as we shall find out, as long as we allow the moduli to be slightly larger than $\sqrt{N}$ (see Section \ref{sec3}). Up to an admissible error we see that $\mathbf{S}$ is given by
 $$\mathbf{S}=\mathop{\sum\sum}_{n,m \sim N}\lambda_{f}(n)\chi (m)\int_{\mathbb{R}}\tilde{I}(\alpha )e((n-m)\alpha)d\alpha ,$$
 where $\widetilde{I}(\alpha ): = \frac{1}{2\delta L} \mathop\sum_{q\in \Phi}\sum_{d ( \mod q)}^{\star }I_{d/q}(\alpha )$ 
 and $I_{d/q}$ is the indicator function of the interval $[\frac{d}{q}-\delta ,\frac{d}{q} + \delta ]$, $Q:= N^{1/2 + \epsilon}$ and $L \asymp Q^{2-\epsilon}$ (see Subsection \ref{circle}).
 
Trivial bound at this stage yields $N^{2+\epsilon}$ and we need to establish the bound $N^{1-\theta}$ for some $\theta >0$, i.e., roughly speaking we need to save $N$. Observe that by our choice of $Q$, there is no analytic oscillation in the weight function $e((n-m)\alpha )$. Hence their weights can be dropped in our sketch. At first using the $GL(2)$ Voronoi summation formula on the $n$ sum we get that
 $$\sum_{n\sim N}\lambda_{f}(n)e\left(\frac{na}{q}\right) \approx \frac{N}{q}\sum_{n \sim \frac{Q^2}{N}}\lambda_{f}(n)e\left(\frac{-n\bar{a}}{q}\right),$$
 where $q$ is of size $Q= N^{1/2 + \theta}$. The left hand side is trivially bounded by $N$, whereas the right hand side is trivially bounded by $Q$. Hence we have ``saved" $\frac{N}{Q}= \sqrt{\frac{N}{N^{2\theta}}}$.
 
$\hspace{-0.39 cm}$ Now applying the Poisson summation formula to the $m$ sum we arrive at
 $$\sum_{m\sim N}\chi (m)e\left(-\frac{ma}{q}\right)\approx \frac{N \tau_{\chi}}{p}\sum_{|m|\ll \frac{pQ}{N}}\bar{\chi}(m)\chi(q)\mathbbm{1}_{a \equiv m\bar{p} (\text{mod }q)},$$
 where $\mathbbm{1}_{a \equiv m\bar{p} (\text{mod }q)}$ is the indicator function for $a \equiv m\bar{p} (\text{mod }q)$ on $\mathbb{Z}$. Compairing the trivial contribution of the two sides we observe that we have ``saved"
 $$\frac{N}{\sqrt{pQ}}\times \sqrt{Q}= \frac{N}{\sqrt{p}}.$$
 
$\hspace{-0.4 cm}$ With this the above sum roughly reduce to
$$\mathbf{S} \approx \frac{N^2}{Q^3 p^{1/2}}\sum_{q \in \Phi}\sum_{n \sim N^{2\theta}}\sum_{m \sim \frac{p N^\theta}{\sqrt{N}}}\lambda_{f}(n)\bar{\chi}(m)\chi(q)e\left(-\frac{\bar{m}np}{q}\right).$$
 
 $\hspace{-0.39 cm}$So far we have ``saved" $N^{1/2 - \theta}\times \frac{N}{\sqrt{p}}=\frac{N^{3/2 - \theta}}{\sqrt{p}}.$ Hence our job is to ``save" $\frac{N}{\frac{N^{3/2 - \theta}}{\sqrt{p}}}=\frac{\sqrt{p}}{N^{1/2 -\theta}}$ in the above sum.

 Next we choose $Q= Q_1 Q_2$ and take the set of moduli $\Phi$  to be  a product of two sets of primes so that (as was done in the Subsections \ref{circle}, \ref{sec5}) $q = q_1 q_2$ in a certain unique way with $q_1 \leq Q_1$ and $q_2 \leq Q_2$ (see Section \ref{sec5}). Then applying the Cauchy-Schwarz inequality we arrive at
 $$\sum_{q_1 \sim Q_1}\sum_{m \sim \frac{p N^\theta}{\sqrt{N}}}\Big| \sum_{n \sim N^{2\theta}}\sum_{q_2 \sim Q_2}\lambda_{f}(n)\chi(q_2 )e\left(-\frac{\bar{m}np}{q_1 q_2}\right)\Big|^{2}.$$
 Next we open the absolute value square and apply the Poisson summation formula to the $m$-sum (after appropriate smoothing). Here the diagonal is of length $Q_2 N^{2\theta}$ and so the contribution of the zero frequency is given by $\ll p N^{4\theta}$. Hence the diagonal contribution is satisfactory if
 $$Q_2 N^{2\theta}> \frac{p}{N^{1-2\theta}}\text{ , i.e., } Q_2 > \frac{p}{N}.$$
  Also the contribution of the off-diagonal is given by $\ll p N^{6 \theta}$.
 Note that this is satisfactory if 
 $$\frac{p N^{\theta /2}}{N^{3/4}\sqrt{Q_2}} > \frac{p}{N^{1-2\theta}} \iff Q_2 < N^{1/2 -3\theta}.$$
 So we have a choice for $Q_2$ if
 $$\frac{p}{N}< N^{1/2 - 3\theta} \Rightarrow p < N^{3/2 - 3\theta}.$$
 Hence as long as $N > p^{2/3 + \epsilon}$ for some $\epsilon> 0$ then the above method yields a non-trivial bound for $\mathbf{S}$.
 
\subsection*{Notation}
In this article `$\ll$' will mean that whenever it occurs, the implied constants will depend on $f, \epsilon$ only and the notation `$X\asymp Y$' will mean that $Y p^{-\epsilon}\leq X \leq Y p^{\epsilon}$.

\subsection*{Acknowledgement}
This work is a part of the author's Ph.D thesis and he is grateful to his advisor Prof. Ritabrata Munshi for suggesting the problem, sharing his beautiful ideas, explaining his ingenious method, and his kind support and encouragement throughout the work. The author is also thankful to Prof. Djordje Milićević for his helpful comments. The author is also thankful to Prof. Satadal Ganguly, Prof. Saurabh Kumar Singh, Kummari Mallesham, Sumit Kumar,  and Prahlad Sharma for their constant support and encouragement and Stat-Math Unit, Indian Statistical Institute, Kolkata, for the excellent research environment. Finally, the author would like to thank the referee for his/her comments and suggestions which really helped to improve the presentation of the article.
 
 \section{PRELIMINARIES}\label{sec2}
 
 \subsection{Preliminaries on holomorphic cusp forms.}  Let $f : \mathbb{H} \mapsto \mathbb{C}$, be a holomorphic cusp form with normalized Fourier coefficients $\lambda_{f}(n)$. Also we take $\chi$, a primitive Dirichlet character of modulus $p$ where $p$ is a prime.

 \subsection{Voronoi summation formula.}  We will use the following Voronoi summation formula. This was first established by T. Meurman (see \cite{TM}) in the case of full level.

 \begin{lemma}\label{2.1} Let f be as above, and $v$ be a compactly supported smooth function on $(0,\infty )$. Also consider $(a,q)=1$. Then we have
 \begin{equation}\label{eq9}
 \mathop\sum_{n=1}^{\infty}\lambda_{f}(n)e_{q}(an)v(n) = \frac{1}{q}\mathop\sum_{n=1}^{\infty}\lambda_{f}(n)e_{q}(- \bar{a}n)V(n),
 \end{equation}
 where $\bar{a}$ is the multiplicative inverse of $a\text{ mod q}$, and $V(n)$ is a certain integral Hankel transform of $v$.
\end{lemma}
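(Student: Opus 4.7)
This is a classical result and the standard route is via the functional equation of the additively twisted Hecke $L$-function. First, I would apply Mellin inversion, writing $v(n)=\frac{1}{2\pi i}\int_{(\sigma)}\widetilde{v}(s)\,n^{-s}\,ds$ for $\sigma>1$, and interchange summation and integration (justified by absolute convergence of the Dirichlet series and rapid decay of $\widetilde{v}$ on vertical lines) to get
$$\sum_{n=1}^{\infty}\lambda_{f}(n)\,e_{q}(an)\,v(n) = \frac{1}{2\pi i}\int_{(\sigma)}\widetilde{v}(s)\,L\bigl(s,f,\tfrac{a}{q}\bigr)\,ds,$$
where $L(s,f,a/q):=\sum_{n\ge 1}\lambda_{f}(n)\,e_{q}(an)\,n^{-s}$.

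The core step is establishing the functional equation of $L(s,f,a/q)$. Since $(a,q)=1$, pick $\bar{a}$ with $a\bar{a}\equiv 1\pmod{q}$ and complete to a matrix $\gamma=\bigl(\begin{smallmatrix}a&b\\ q&d\end{smallmatrix}\bigr)\in SL(2,\mathbb{Z})$ with $d\equiv\bar{a}\pmod{q}$. One checks $\gamma\bigl(-d/q+i/(q^{2}y)\bigr)=a/q+iy$, and combining this with modularity $f(\gamma z)=(qz+d)^{k}f(z)$ and periodicity $f(z+1)=f(z)$ yields the theta-type transformation
$$f(a/q+iy)=i^{k}\,q^{-k}\,y^{-k}\,f\bigl(-\bar{a}/q+i/(q^{2}y)\bigr).$$
Splitting the Mellin integral for $\Lambda(s,f,a/q):=(2\pi)^{-s-(k-1)/2}\,\Gamma(s+(k-1)/2)\,L(s,f,a/q)$ at $y=1/q$ and substituting $y\mapsto 1/(q^{2}y)$ on $[0,1/q]$ shows $\Lambda(s,f,a/q)$ is entire and yields the functional equation $\Lambda(s,f,a/q)=i^{k}\,q^{1-2s}\,\Lambda(1-s,f,-\bar{a}/q)$.

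Given this, I would shift the contour in the Mellin representation past the critical strip (cuspidality of $f$ ensures no poles), substitute the functional equation, send $s\mapsto 1-s$, and expand $L(s,f,-\bar{a}/q)$ as a Dirichlet series. The factor $q^{1-2s}$ produces the prefactor $1/q$ outside the dual sum, and the remaining Mellin--Barnes integral against $n^{-s}$ is identified with a Bessel kernel via the classical identity $\int_{0}^{\infty}J_{k-1}(4\pi\sqrt{y})\,y^{s-1}\,dy=(2\pi)^{-2s}\,\Gamma(s+(k-1)/2)/\Gamma((k+1)/2-s)$ combined with Parseval, giving
$$V(n)=2\pi\,i^{k}\int_{0}^{\infty}v(x)\,J_{k-1}\bigl(\tfrac{4\pi\sqrt{nx}}{q}\bigr)\,dx.$$

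The main obstacle is the bookkeeping in the functional equation: one must carefully follow the modular substitution, track the factor $i^{k}$ (requiring $k$ to be even, which is automatic on $SL(2,\mathbb{Z})$), and match the gamma factors so that precisely the prefactor $1/q$ emerges on the dual side. For Maass forms the same scheme works, with $\Gamma(s+(k-1)/2)$ replaced by a product of two gammas depending on the spectral parameter and parity, and the Bessel kernel $J_{k-1}$ replaced by the appropriate combination of $J$- and $Y$-Bessel functions (or a $K$-Bessel function).
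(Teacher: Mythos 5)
Your proposal is correct: it is the classical derivation of the level-one $GL(2)$ Voronoi formula via the functional equation of $L(s,f,a/q)$, obtained from the modular transformation under $\gamma=\bigl(\begin{smallmatrix}a&b\\ q&d\end{smallmatrix}\bigr)$, and the computations you indicate (the identity $\gamma(-d/q+i/(q^2y))=a/q+iy$, the functional equation $\Lambda(s,f,a/q)=i^k q^{1-2s}\Lambda(1-s,f,-\bar a/q)$, and the Mellin--Bessel identity producing $V(n)=2\pi i^k\int_0^\infty v(x)J_{k-1}(4\pi\sqrt{nx}/q)\,dx$) all check out. The paper itself offers no proof of this lemma, treating it as a standard quoted result, so there is nothing to compare against; the only point to flag is that the Mellin transform of $J_{k-1}$ converges only in a strip and the contour shift/interchange must be justified by analytic continuation and the rapid decay of $\widetilde v$, which is routine.
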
 

Here note that, if we take $v$ to be supported in $[Y, 2Y]$ and satisfying $y^j v^{(j)}(y) \ll_{j} 1$, then one can see that the sum on the right hand side of \eqref{eq9} becomes being supported essentially on $n \ll {q^2 (qY)^\epsilon /Y}$ (the implied constant depends only on $f \text{ and }\epsilon$). Also note that the terms with $n \gg {q^2 (qY)^\epsilon /Y}$ contributes an amount which is negligibly small. For smaller values of $n$ one can consider the trivial bound $V(n/ q^2)\ll Y$. For more details one can see the paper of R. Munshi (see \cite{Mun}).
 
 \subsection{Circle Method :}\label{circle}
 
 Here in this paper we shall use Jutila's circle method (see \cite{PA}, \cite{MZ}). For any set $S \subset R$, let $I_{S}$ denote the associated characteristic function, i.e. $I_{S}(x)=1$ for $x \in S$ and $0$ otherwise. For any collection of positive integers $\Phi \subset [Q,2Q]$ (which we call the set of moduli), where $Q \geq 1 $ and a positive real number $\delta$ in the range $Q^{-2} \ll \delta \ll Q^{-1}$, we define the function
$$\tilde{I}_{\Phi , \delta}(x):= \frac{1}{2\delta L}\sum_{q \in \Phi}\sum_{d \mod q}I_{[\frac{d}{q}-\delta , \frac{d}{q}+ \delta]}(x),$$
where $I_{[\frac{d}{q}-\delta , \frac{d}{q}+ \delta]}$ is the indicator function of the interval $[\frac{d}{q}-\delta , \frac{d}{q}+ \delta]$. Here $L := \mathop\sum_{q \in \Phi}\phi (q)$ (then roughly we have $L \asymp Q^2$) and we will choose $\Phi$ in such a way that $L \asymp Q^{2-\epsilon}$.
 
Then this becomes an approximation of $I_{[0,1]}$ in the following sense:
\begin{lemma}\label{2.2} We have
$$\int_{\mathbb{R}}\left| I_{[0,1]}(x) - \tilde{I}_{\Phi , \delta}(x)\right|^2 dx \ll \frac{Q^{2+\epsilon}}{\delta L^2},$$
where $I$ is the indicator function of $[0,1]$.
\end{lemma}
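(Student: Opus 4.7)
The plan is to apply Parseval's identity to the mean-zero function $f:=I-\widetilde{I}$ and reduce the problem to bounding an average of Ramanujan sums. First observe that $\int_{\mathbb{R}}\widetilde{I}=\frac{1}{2\delta L}\sum_{q\in\Phi}\phi(q)\cdot 2\delta=1=\int_{\mathbb{R}}I$, so $f$ has vanishing zeroth Fourier coefficient. Since $\widetilde{I}$ is supported essentially in $[0,1]$ (up to a boundary overflow of width $O(\delta)$ whose contribution to the squared $L^{2}$-norm is $O(1/(\delta Q^{2}))$, already within the target bound), we may regard $f$ as a function on the circle $\mathbb{R}/\mathbb{Z}$ and use Fourier series.

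For $n\in\mathbb{Z}\setminus\{0\}$ one computes
\[
\widehat{\widetilde{I}}(n)=\frac{1}{2\delta L}\sum_{q\in\Phi}\sum_{d\bmod q}^{*}\int_{d/q-\delta}^{d/q+\delta}e(-n\alpha)\,d\alpha=\frac{\sin(2\pi n\delta)}{2\pi n\delta L}\sum_{q\in\Phi}c_{q}(n),
\]
where $c_{q}(n)=\sum_{d\bmod q}^{*}e(-nd/q)$ is the Ramanujan sum. Since $\widehat{I}(n)=0$ for $n\neq 0$, Parseval's identity yields
\[
\int_{\mathbb{R}}|f|^{2}\,d\alpha=\sum_{n\neq 0}\frac{\sin^{2}(2\pi n\delta)}{(2\pi n\delta L)^{2}}\,\Bigl|\sum_{q\in\Phi}c_{q}(n)\Bigr|^{2}.
\]

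For the inner sum I would use the classical bound $|c_{q}(n)|\leq\gcd(q,n)$ together with the divisor estimate:
\[
\Bigl|\sum_{q\in\Phi}c_{q}(n)\Bigr|\leq\sum_{q\leq 2Q}\gcd(q,n)=\sum_{d\mid n}d\cdot\#\{q\leq 2Q:d\mid q\}\leq 2Q\tau(n)\ll Qn^{\epsilon}.
\]
Then split the Parseval sum at $|n|=1/\delta$. For $|n|\leq 1/\delta$ the inequality $\sin^{2}(2\pi n\delta)\leq(2\pi n\delta)^{2}$ bounds the $n$-th summand by $Q^{2}n^{2\epsilon}/L^{2}$, contributing $\ll Q^{2}\delta^{-1-2\epsilon}/L^{2}$ in total; for $|n|>1/\delta$ the inequality $\sin^{2}(2\pi n\delta)\leq 1$ bounds the $n$-th summand by $Q^{2}n^{2\epsilon-2}/(\delta^{2}L^{2})$, again contributing $\ll Q^{2}\delta^{-1-2\epsilon}/L^{2}$. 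Using $\delta\geq 1/Q^{2}$ to absorb $\delta^{-2\epsilon}\leq Q^{4\epsilon}$, we obtain
\[
\int_{\mathbb{R}}|f|^{2}\,d\alpha\ll\frac{Q^{2+\epsilon}}{\delta L^{2}}.
\]

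The heart of the argument is the Ramanujan-sum cancellation $|\sum_{q\in\Phi}c_{q}(n)|\ll Qn^{\epsilon}$, which beats the trivial bound $\sum_{q}|c_{q}(n)|\leq L\asymp Q^{2}$ by a factor of $Q$ and is precisely what converts the naive $Q^{4}$ in the numerator into $Q^{2+\epsilon}$. The remaining subtleties -- the boundary overflow of $\widetilde{I}$ past $[0,1]$ and the precise balancing of the two regimes in the Parseval split -- are routine perturbations comfortably within the tolerated error.
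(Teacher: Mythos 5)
Your proof is correct, and it follows exactly the route the paper itself points to (the paper offers no details beyond ``a consequence of Parseval,'' deferring to Jutila): periodize, apply Parseval, compute the Fourier coefficients of $\widetilde{I}$ as Ramanujan sums, use $|c_q(n)|\le \gcd(q,n)$ to get the crucial $\bigl|\sum_{q\in\Phi}c_q(n)\bigr|\ll Qn^{\epsilon}$, and split the dual sum at $|n|=1/\delta$. The boundary-overflow and $\delta^{-2\epsilon}\le Q^{4\epsilon}$ bookkeeping are handled correctly, so nothing further is needed.
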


$\hspace{-0.39 cm}$ This is a consequence of the Parseval theorem from Fourier analysis (see \cite{PA}).
 
 \section{Setting-up the circle method :}\label{sec3}

$\hspace{-0.4 cm}$Let us apply the circle method directly to the smooth sum
$$S(N)= \mathop\sum_{n \in \mathbb{Z}}\lambda_{f}(n)\chi(n)h_{1}\left(\frac{n}{N}\right),$$
 where the function $h_1$ is smooth, supported in $[1,2]$ with $h_{1}^{(j)}(x)\ll_{j} 1$. Now we will approximate the above sum $S(N)$ using M. Jutila's circle method (see \cite{PA}, \cite{MZ}) by the following sum :
$$\tilde{S}(N)=\frac{1}{L}\mathop\sum_{q \in \Phi} \hspace{0.1cm} \sideset{}{^*}\sum_{a(\text{ mod } q)}\mathop{\sum\sum}_{n,m \in \mathbb{Z}}\lambda_{f}(n)\chi (m)e\left(\frac{a(n-m)}{q}\right)F(n,m),$$
 where $e_{q}(x)= e^{2\pi i x /q}$, and 
$$F(x,y)= h_{1}\left(\frac{x}{N}\right)h_{2}\left(\frac{y}{N}\right)\frac{1}{2\delta}\int_{-\delta}^{\delta}e(\alpha (n-m))d\alpha .$$
 Here $h_{2}$ is another smooth function having compact support in $(0,\infty)$, with $h_{2}(x)=1$ for $x$ in the support of $h_{1}$. Also we choose $\delta = N^{-1}$ so that we have 
 $$\frac{\partial^{i+j}}{\partial^{i}x \partial^{j}y}F(x,y)\ll_{i,j}\frac{1}{N^{i+j}}.$$

$\hspace{-0.4 cm}$Then we have the following lemma :

\begin{lemma}\label{3.1} Let $\Phi \subset [Q,2Q]$, with 
$$L=\sum_{q\in \Phi}\phi(q) \gg Q^{2-\epsilon},$$
 and $\delta = \frac{1}{N} \gg \frac{N^{2\theta}}{Q^2}$. Then we must have 

$$S(N)= \tilde{S}(N) + O_{f,\epsilon}\left( \sqrt{N} \frac{N(QN)^{\epsilon}}{Q}\right).$$
\end{lemma}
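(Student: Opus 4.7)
My plan is to express $S(N)-\tilde S(N)$ as a single oscillatory integral against the Jutila kernel $I-\tilde I$, then bound it via Cauchy--Schwarz together with Lemma 2 and Parseval.

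First, introduce the trigonometric polynomials
\[
A(\alpha)=\sum_n \lambda_f(n)\,h_1(n/N)\,e(\alpha n),\qquad B(\alpha)=\sum_m\chi(m)\,h_2(m/N)\,e(\alpha m).
\]
Using Fourier orthogonality $\int_0^1 e(\alpha k)\,d\alpha=\delta_{k,0}$ together with the fact that $h_2\equiv 1$ on $\mathrm{supp}(h_1)$, one checks that
\[
S(N)=\int_{\mathbb R}I(\alpha)A(\alpha)B(-\alpha)\,d\alpha,\qquad \tilde S(N)=\int_{\mathbb R}\tilde I(\alpha)A(\alpha)B(-\alpha)\,d\alpha,
\]
so that $S(N)-\tilde S(N)=\int_{\mathbb R}(I-\tilde I)(\alpha)\,A(\alpha)\,B(-\alpha)\,d\alpha$.

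Next I would apply Cauchy--Schwarz on the outer $n$-variable. Writing the right-hand side as $\sum_n\lambda_f(n)h_1(n/N)T(n)$ with $T(n)=\int(I-\tilde I)(\alpha)B(-\alpha)e(\alpha n)\,d\alpha$, the Rankin--Selberg bound $\sum_{n\sim N}|\lambda_f(n)|^2\ll N^{1+\epsilon}$ gives
\[
|S(N)-\tilde S(N)|^2\ll N^{1+\epsilon}\sum_n|T(n)|^2.
\]
Since the function $(I-\tilde I)B(-\cdot)$ is compactly supported on $[-\delta,1+\delta]$, periodizing it to $[0,1]$ and applying Parseval to the resulting Fourier series yields the Bessel-type inequality
\[
\sum_n|T(n)|^2\ll\int_{\mathbb R}|I-\tilde I|^2|B|^2\,d\alpha.
\]

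It remains to estimate this integral. Lemma 2 together with the hypotheses $L\gg Q^{2-\epsilon}$ and $\delta\gg N^{2\theta}/Q^2$ yields $\|I-\tilde I\|_2^2\ll (QN)^\epsilon/N^{2\theta}$. To avoid losing a factor of $\sqrt N$ from the trivial estimate $\|B\|_\infty\ll N$, one exploits that the mass of $\tilde I$ is concentrated in short intervals around the Farey fractions $d/q$ with $q\sim Q$; there the large-sieve inequality
\[
\sum_{q\sim Q}\;\sideset{}{^*}\sum_{d\,(q)}|B(d/q)|^2\ll(Q^2+N)\,N
\]
replaces the supremum of $|B|^2$ by its Parseval average of size $N$, while on the bulk one uses $\int_0^1|B|^2\,d\alpha=\sum_m|\chi(m)|^2 h_2(m/N)^2\ll N$ directly. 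Substituting back and taking square roots delivers the claimed bound $\sqrt N\cdot N(QN)^\epsilon/Q$.

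The main obstacle is this last step --- replacing the $L^\infty$-bound for $|B|^2$ on the support of $\tilde I$ by a large-sieve average. Balancing this estimate with Lemma 2 is exactly what forces the hypothesis $\delta\gg N^{2\theta}/Q^2$, and it is where the $\sqrt N$-saving over the naive argument enters.
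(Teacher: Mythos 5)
Your reduction of $S(N)-\tilde S(N)$ to $\int_{\mathbb R}(I-\tilde I)(\alpha)A(\alpha)B(-\alpha)\,d\alpha$ is exactly the paper's starting point, and the Rankin--Selberg plus periodization/Bessel step correctly reduces matters to bounding $\int_{\mathbb R}|I-\tilde I|^2|B|^2\,d\alpha$ by roughly $N^2/Q^2$. The gap is in that last bound. The function $I-\tilde I$ is \emph{not} concentrated near the fractions $d/q$: on the part of $[0,1]$ not covered by the arcs $[d/q-\delta,\,d/q+\delta]$ it equals $I=1$, and Lemma 2 controls only its $L^2$ norm, through cancellation between $I$ and $\tilde I$ on the arcs. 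Your proposed splitting into ``arcs'' (large sieve) and ``bulk'' (Parseval) destroys precisely this cancellation: the bulk contribution is bounded only by $\int_0^1|B|^2\ll N$, which after multiplying by the Rankin--Selberg factor $N^{1+\epsilon}$ and taking square roots returns the trivial estimate $N^{1+\epsilon}$ with no saving at all. Nor can you fall back on a sup-norm for $B$: near $\alpha=a/p$ one has $|B(\alpha)|\asymp N/\sqrt p$, so $\|B\|_\infty^2\,\|I-\tilde I\|_2^2$ is still far too large throughout the stated range of $p$. Making $|B|^2$ behave like its mean $N$ against the weight $|I-\tilde I|^2$ would require, e.g., fourth-moment information on both factors, which neither Lemma 2 nor the large sieve supplies.

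The paper avoids this entirely by putting the $L^\infty$ bound on the \emph{cusp form} side: the Wilton-type estimate $\sum_n\lambda_f(n)e(n\alpha)h_1(n/N)\ll_{f,\epsilon}N^{1/2+\epsilon}$, uniform in $\alpha$, is pulled out of the integral, and a single Cauchy--Schwarz in $\alpha$ then gives $\|I-\tilde I\|_2\cdot\|B\|_2\ll \bigl(Q^{1+\epsilon}/(\sqrt{\delta}L)\bigr)\cdot N^{1/2}$, which with $L\gg Q^{2-\epsilon}$ and $\delta\gg N^{2\theta}/Q^2$ yields the stated error term. If you prefer to keep your Rankin--Selberg/Bessel structure, apply it to the $m$-variable instead, so that the weighted integral becomes $\int|I-\tilde I|^2|A|^2$, which \emph{is} controlled by $\|A\|_\infty\ll N^{1/2+\epsilon}$; either way, the pointwise bound for the $\mathrm{GL}(2)$ exponential sum is the ingredient your argument is missing.
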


\begin{proof}
For the proof of this lemma one can see Lemma $3$ of \cite{Mun}.
\end{proof}

$\hspace{-0.4 cm}$We will choose the set of modulii in Section \ref{sec5}. We shall pick the set of the modulii to be $Q = N^{1/2 +\theta }$. Hence the error term getting from the previous lemma is $O(N^{1-\theta +\epsilon})$ for some $\theta > 0$. Now we shall proceed towards the estimation of $\tilde{S}(N)$.

\section{Estimation of $\tilde{S}(N)$}\label{sec4}
\subsection{Applying summation formulae}
Let us now assume that each member of $\Phi$ is coprime to $p$, the modulus of the character $\chi$. Let us define
\begin{equation}\label{eq10}
\tilde{S}_{x}(N) =\frac{1}{L}\mathop\sum_{q\in \Phi} \hspace{2mm}\sideset{}{^*}\sum_{a(\text{mod }q)} S\left(a,q,x,f\right) \, T\left(a,q,x,\chi\right),
\end{equation}
where 
$$S\left(a,q,x,f\right):= \sum_{n \in \mathbb{Z}}\lambda_{f}(n)h_{1}\left(\frac{n}{N}\right)e\left(\frac{ an}{q}\right)e(nx),$$ 
 and 
$$T\left(a,q,x,\chi\right):=  \sum_{m \in \mathbb{Z}}\chi (m)e\left(-\frac{am}{q}\right)h_{2}\left(\frac{m}{N}\right)e(-xm) ,$$

$\hspace{-0.39 cm}$with $|x|< \delta$. Then we have 
$$\tilde{S}(N)= (2\delta)^{-1}\int_{-\delta}^{\delta}\tilde{S}_{x}(N) dx .$$

$\hspace{-0.39 cm}$Let us first study the $n$-sum using the Voronoi summation formula.
\begin{equation}\label{eq11}
S\left(a,q,x,f\right)= \sum_{n =1}^{\infty}\lambda_{f}(n)h_{1}\left(\frac{n}{N}\right)e\left(\frac{ an}{q}\right)e(nx).
\end{equation}

$\hspace{-0.39 cm}$Then we have the following lemma: 

\begin{lemma}\label{4.1} We have
\begin{equation}\label{eq12}
S\left(a,q,x,f\right)= \frac{N^{3/4}}{q^{1/2}}\sum_{|n|\ll \frac{Q^2}{N}}\frac{\lambda_{f}(n)}{n^{1/4}}e\left(-\frac{\bar{a}n}{q}\right)\mathcal{I}_{1}(n,x,q) + O(N^{-2021}),
\end{equation}
where $q \in [Q, 2Q]$, coprime with $p$ and $\mathcal{I}_{1}(n,x,q)$ is given by
$$\mathcal{I}_{1}(n,x,q):= \int_{\mathbb{R}}h_{1}(y)e\left( Nxy \pm \frac{4\pi}{q}\sqrt{Nny}\right) W\left(\frac{4\pi \sqrt{Nny}}{q}\right) dy ,$$
where $W$ is a smooth nice function.

\end{lemma}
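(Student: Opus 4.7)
The plan is a direct application of Lemma 2.1 to the $n$-sum, followed by standard manipulation of the resulting Hankel transform. Taking the test function to be $v(y)=h_{1}(y/N)\,e(xy)$, which is smooth and compactly supported in $[N,2N]$, and using $(a,q)=1$, Lemma 2.1 gives
$$S(a,q,x,f)=\frac{1}{q}\sum_{n=1}^{\infty}\lambda_{f}(n)\,e_{q}(-\bar{a}n)\,V(n),$$
where $V(n)$ is the Hankel transform of $v$. For a holomorphic form of weight $k$ the kernel is $J_{k-1}(4\pi\sqrt{\,\cdot\,y}/q)$ (up to universal constants). The remaining task is to extract the leading oscillatory shape of $V(n)$ and to truncate the dual sum to $|n|\ll Q^{2}/N$ up to $(qN)^{\epsilon}$.

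I would first change variables $y\mapsto Ny$ to normalize $h_{1}$ to $[1,2]$, so that $V(n)$ equals $N$ times an integral of $h_{1}(y)\,e(Nxy)\,J_{k-1}(4\pi\sqrt{Nny}/q)\,dy$ over $(0,\infty)$. Then I would invoke the standard two-term oscillatory decomposition
$$J_{k-1}(z)=z^{-1/2}\bigl(e^{iz}W_{+}(z)+e^{-iz}W_{-}(z)\bigr),\qquad z^{j}W^{(j)}_{\pm}(z)\ll_{j}1,\quad z\gg 1,$$
with the complementary range $z\ll 1$ handled by the trivial bound $J_{k-1}(z)\ll z^{k-1}$ (its contribution, after the final sum truncation, is $O(N^{-2021})$). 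Substituting the asymptotic, the factor $z^{-1/2}=q^{1/2}(4\pi)^{-1/2}(Nny)^{-1/4}$ pulls the constants $N^{3/4}q^{-1/2}n^{-1/4}$ out of the integral; folding the benign smooth factors $y^{-1/4}$ and $(4\pi)^{-1/2}$ into the weight (which we continue to call $W$, retaining the derivative bounds on the relevant range) produces precisely the integral $\mathcal{I}_{1}(n,x,q)$. Combining everything yields $q^{-1}V(n)=N^{3/4}q^{-1/2}n^{-1/4}\,\mathcal{I}_{1}(n,x,q)$, which is the claimed main form.

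Finally, to truncate the dual sum to $|n|\ll Q^{2}/N$, I would run a non-stationary-phase argument inside $\mathcal{I}_{1}$. Writing the phase as $\Psi(y)=Nxy\pm 2\sqrt{Nny}/q$ one has $\Psi'(y)=Nx\pm\sqrt{Nn/y}/q$; since $\delta\ll N^{-1}$ forces $|Nx|\ll 1$, while for $n\gg q^{2}(qN)^{\epsilon}/N$ the second term satisfies $\sqrt{Nn/y}/q\gg (qN)^{\epsilon}$ uniformly on $[1,2]$, the phase is non-stationary in that range and repeated integration by parts yields arbitrary polynomial saving, contributing $O(N^{-2021})$. The main technical point, and the only place where genuine care is needed, is verifying that each integration by parts really gains the expected factor uniformly in $(n,q,x)$: this reduces via the chain rule to the derivative bounds $z^{j}W^{(j)}(z)\ll_{j}1$ combined with the smoothness of $h_{1}$, and the fact that $|\Psi'(y)|$ is bounded below by the dominant term throughout the integration range.
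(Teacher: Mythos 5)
Your proposal follows the same route as the paper's proof: apply the Voronoi formula of Lemma 2.1 with $v(y)=h_{1}(y/N)e(xy)$, substitute $y\mapsto Ny$, insert the oscillatory decomposition of $J_{k-1}$ to pull out the factor $N^{3/4}q^{-1/2}n^{-1/4}$, and truncate the dual sum at $|n|\ll Q^{2}N^{\epsilon}/N$. The paper merely asserts the truncation; your non-stationary-phase justification, playing $|Nx|\le 1$ (from $\delta=N^{-1}$) against $\sqrt{Nn/y}/q\gg(qN)^{\epsilon}$ and checking that derivatives of the amplitude are harmless, is the correct way to substantiate it, and your normalization of the phase ($Nxy\pm 2\sqrt{Nny}/q$ inside $e(\cdot)$) is in fact more careful than the paper's.

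The one concretely inaccurate point is your treatment of the complementary range $z=4\pi\sqrt{Nny}/q\ll 1$. For the $n$ that \emph{survive} the truncation, i.e.\ $n\ll q^{2}/N$, one has $z\asymp\sqrt{Nn}/q$, which can be as small as $N^{-\theta}$; the trivial bound $J_{k-1}(z)\ll z^{k-1}$ then gives a contribution that is only polynomially small in $N$ (and, summed over $n\ll Q^{2}/N$, of the same order of magnitude as the main term), not $O(N^{-2021})$. So this range cannot be discarded. The standard remedy, implicit in the paper's ``$W$ a nice function,'' is to use a version of the decomposition $J_{k-1}(z)=z^{-1/2}\bigl(W(z)e(z/2\pi)+\overline{W}(z)e(-z/2\pi)\bigr)$ valid for \emph{all} $z>0$, with $W$ smooth and $z^{j}W^{(j)}(z)\ll_{j}\min(1,z^{k-1/2})$, so that the small-$z$ terms are absorbed into $\mathcal{I}_{1}$ rather than estimated away. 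With that adjustment your argument is complete and coincides with the paper's.
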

\begin{proof}
Applying the Voronoi summation formula \eqref{eq9} to the $n$-sum of the equation \eqref{eq11}, then we have
$$\hspace{-4.6cm}\sum_{n \in \mathbb{Z}}\lambda_{f}(n)e\left(\frac{an}{q}\right)e(nx)h_{1}\left(\frac{n}{N}\right) = \frac{1}{q}\sum_{n \in \mathbb{Z}}\lambda_{f}(n)e\left(-\frac{\bar{a}n}{q}\right)$$
 $$\hspace{5cm}\times\int_{\mathbb{R}}h_{1}\left(\frac{y}{N}\right)e(xy)J_{k-1}\left(\frac{4\pi \sqrt{ny}}{q}\right) dy, $$
 where $J_{k-1}$ is the Bessel function. By changing $y \mapsto Ny$ and using the decomposition, 
 $$J_{k-1}(x)= \frac{W(x)}{\sqrt{x}}e(x)+ \frac{\bar{W}(x)}{\sqrt{x}}e(-x) ,$$
  where $W(x)$ is a nice function, the right hand side integral becomes
 $$N^{3/4}q^{1/2} \int_{\mathbb{R}}h_{1}(y)e\left( Nxy \pm \frac{4\pi}{q}\sqrt{Nny}\right) W\left(\frac{4\pi \sqrt{Nny}}{q}\right) dy . $$
By repeated integral by parts we see that, this integral is negligibly small if $|n|\gg \frac{Q^{2} N^\epsilon}{N}$. Hence the lemma follows. 
 \end{proof}
 \begin{remark}
 Note that $x \asymp \frac{\sqrt{n}}{\sqrt{N}q}$, otherwise $\mathcal{I}_{1}(n,x,q)$ is negligibly small.
 \end{remark}
$\hspace{-0.4 cm}$ Now let us consider the $m$-sum of \eqref{eq10} given by
 \begin{equation}\label{eq13}
 T\left(a,q,x,\chi\right)=  \sum_{m \in \mathbb{Z}}\chi (m)e\left(-\frac{am}{q}\right)h_{2}\left(\frac{m}{N}\right)e(-xm)
,
 \end{equation}
 for which we have the following lemma:
 \begin{lemma}\label{4.2}We have
  \begin{equation}\label{eq14}
   T\left(a,q,x,\chi\right) = \frac{N\tau_{\chi}}{p}\sum_{\substack{|m| \ll \frac{pQ}{N} \\ m\bar{p}\equiv a (\text{mod }q)}}\overline{\chi}(m)\chi(q) \mathcal{I}_{2}(m,x,q) + O(N^{-2021}),
  \end{equation}
  where 
  $$\mathcal{I}_{2}(m,x,q) := \int_{\mathbb{R}}h_{2}(y)e(-Nxy)e\left(-\frac{mNy}{pq}\right) dy .$$
 \end{lemma}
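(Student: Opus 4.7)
The plan is to apply Poisson summation in the $m$-variable modulo $pq$ and then evaluate the resulting complete character sum via the Chinese Remainder Theorem. Recall that $(q,p)=1$ by the standing assumption at the start of Section $4$, and $(a,q)=1$ from the $\ast$ on the $a$-sum, so the inverses $\bar q$ modulo $p$ and $\bar p$ modulo $q$ are both well defined.

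First, I would break the sum into residue classes modulo $pq$. Writing $m=\alpha+pq\ell$ with $\alpha$ ranging over $\mathbb{Z}/pq\mathbb{Z}$ and $\ell \in \mathbb{Z}$, the factor $\chi(m)$ depends only on $\alpha \pmod p$ and $e(-am/q)$ only on $\alpha \pmod q$. Applying Poisson summation to the inner $\ell$-sum and making the change of variables $y\mapsto Ny$ in the Fourier integral identifies the frequency-$m$ Fourier coefficient as $N\mathcal{I}_2(m,x,q)/(pq)$, giving
$$T(a,q,x,\chi)=\frac{N}{pq}\sum_{m\in\mathbb{Z}}\mathcal{I}_2(m,x,q)\,C(m),\qquad C(m):=\sum_{\alpha\,(\mathrm{mod}\,pq)}\chi(\alpha)e\!\left(\frac{-a\alpha}{q}\right)e\!\left(\frac{m\alpha}{pq}\right).$$

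Second, I would evaluate $C(m)$ by CRT. The identity $m\alpha/(pq)\equiv m\bar q\alpha/p+m\bar p\alpha/q \pmod 1$ (with $q\bar q\equiv 1\pmod p$, $p\bar p\equiv 1\pmod q$) together with the bijection $\alpha\leftrightarrow(\alpha_1,\alpha_2)\in(\mathbb{Z}/p)\times(\mathbb{Z}/q)$ factors $C(m)$ as
$$\Biggl(\sum_{\alpha_1\,(\mathrm{mod}\,p)}\chi(\alpha_1)e\!\left(\frac{m\bar q\alpha_1}{p}\right)\Biggr)\Biggl(\sum_{\alpha_2\,(\mathrm{mod}\,q)}e\!\left(\frac{(m\bar p-a)\alpha_2}{q}\right)\Biggr).$$
The $\alpha_2$-sum equals $q\cdot\mathbbm{1}_{m\bar p\equiv a\,(\mathrm{mod}\,q)}$. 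For the $\alpha_1$-sum, primitivity of $\chi$ modulo $p$ forces vanishing when $p\mid m$, while for $(m,p)=1$ the substitution $\alpha_1\mapsto\overline{m\bar q}\,\alpha_1$ produces $\bar\chi(m\bar q)\tau_\chi=\bar\chi(m)\chi(q)\tau_\chi$, using $\overline{\chi(\bar q)}=\chi(q)$. Assembling gives precisely $T(a,q,x,\chi)=\frac{N\tau_\chi}{p}\sum_{m\bar p\equiv a\,(\mathrm{mod}\,q)}\bar\chi(m)\chi(q)\mathcal{I}_2(m,x,q)$.

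Finally, I would truncate the dual range. Since $|x|<\delta=N^{-1}$, the phase of $\mathcal{I}_2(m,x,q)$ has $y$-derivative of size $\asymp Nm/(pq)$ once $|m|\gg pq/N\asymp pQ/N$; repeated integration by parts against the fixed smooth cutoff $h_2$ then makes the integral $O_A(N^{-A})$ for any $A$, which yields the cutoff $|m|\ll pQ/N$ up to an error $O(N^{-2021})$. The only genuinely delicate step is the Gauss-sum evaluation — tracking coprimality and applying primitivity cleanly so that $p\mid m$ terms are killed and the Gauss sum $\tau_\chi$ emerges with the correct twist $\chi(q)$; the Poisson and stationary-phase parts are otherwise routine.
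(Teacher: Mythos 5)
Your proposal is correct and follows essentially the same route as the paper: Poisson summation modulo $pq$ on the $m$-sum, CRT factorization of the resulting complete sum into a Gauss sum modulo $p$ (yielding $\bar\chi(m)\chi(q)\tau_\chi$) and a full additive character sum modulo $q$ (yielding the congruence condition $m\bar p\equiv a\pmod q$), followed by truncation of the dual variable via integration by parts. You carry out the character-sum evaluation in more detail than the paper does (which, incidentally, writes the complete sum over $\beta\pmod q$ where it should be $\beta\pmod{pq}$), but the argument is the same.
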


\begin{proof}
To the $m$-sum in the equation \eqref{eq13}, we apply the Poisson summation formula to get that
$$T\left(a,q,x,\chi\right) = \frac{N}{pq}\sum_{m \in \mathbb{Z}}\mathcal{I}_{2}(m,x,q)\sum_{\beta \text{( mod }q)}\chi (\beta)e\left(-\frac{a\beta}{q}\right) e\left(\frac{m\beta}{pq}\right) ,$$
where 
$$\mathcal{I}_{2}(m,x,q) := \int_{\mathbb{R}}h_{2}(y)e(-Nxy)e\left(-\frac{mNy}{pq}\right) dy .$$
 Here note that, this integral is negligibly small if $|m|\gg \frac{pQ}{N}N^\epsilon$.

So we have
$$T\left(a,q,x,\chi\right) = \frac{N}{pq}\sum_{|m|\ll \frac{pQ}{N}N^\epsilon}\mathcal{I}_{2}(m,x,q)\sum_{\beta \text{( mod }pq)}\chi (\beta)e\left(-\frac{a\beta}{q}\right) e\left(\frac{m\beta}{pq}\right) + O(N^{-2021}).$$
 
 As we know $(p,q)=1$, so that we can write $\beta$ as $\beta = \beta_{1}q\bar{q}+\beta_{2}p\bar{p}$, where $\beta_{1},\beta_{2}$ runs through a complete set of residue classes congruent to $p,q$ respectively.
 Then substituting these in the place of $\beta$ we have
 $$T\left(a,q,x,\chi\right) = \frac{N\tau_{\chi}}{p}\sum_{\substack{|m| \ll \frac{pQ}{N} \\ m\bar{p}\equiv a (\text{ mod }q)}}\overline{\chi}(m)\chi(q) \mathcal{I}_{2}(m,x,q)+ O(N^{-2021}).$$
 
$\hspace{-0.45 cm}$This completes the proof.
\end{proof}

$\hspace{-0.39 cm}$From \eqref{eq12} and \eqref{eq14} we get:

\begin{proposition}\label{p1}
We have
\begin{equation}\label{eq15}
\begin{split}
\tilde{S}_{x}(N) &= \frac{N^{7/4}}{\sqrt{p}L}\sum_{q\in \Phi}\frac{\chi (q)}{q^{1/2}}\sum_{|n| \ll \frac{Q^{2}}{N}}\sum_{\substack{|m| \ll \frac{pQ}{N} \\ (m,q)=1}}\frac{\lambda_{f}( n)}{n^{1/4}}\overline{\chi}(m)e\left(-\frac{p\bar{m}n}{q}\right)\mathcal{I}_{1}(n,x,q)\mathcal{I}_{2}(m,x,q)  \\
&  \hspace{10cm} +  O(N^{-2021}),
\end{split}
\end{equation}
where $\mathcal{I}_{1}(n,x,q),\mathcal{I}_{2}(m,x,q)$ are given by \eqref{eq12},\eqref{eq14} respectively.
\end{proposition}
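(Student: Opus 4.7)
The strategy is direct substitution of Lemmas 4.1 and 4.2 into the definition (10) of $\tilde S_x(N)$, followed by an explicit evaluation of the inner sum over $a\,(\text{mod }q)$.

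First, I would plug the main term of (12) for $S(a,q,x,f)$ and the main term of (14) for $T(a,q,x,\chi)$ into the double sum in (10). The cross-terms obtained by multiplying a main part against an $O(N^{-2021})$ error remain negligible after summation, because the main parts are polynomially bounded in $N$ and the $q,a,n,m$ ranges are all polynomial in $N$; this is absorbed simply by choosing the saving exponent slightly larger at the Voronoi/Poisson stages. The resulting expression takes the form
$$\tilde S_x(N)=\frac{N^{7/4}\tau_\chi}{pL}\sum_{q\in\Phi}\frac{\chi(q)}{q^{1/2}}\sum_{|n|\ll Q^2/N}\sum_{|m|\ll pQ/N}\frac{\lambda_f(n)\bar\chi(m)}{n^{1/4}}\mathcal I_1(n,x,q)\mathcal I_2(m,x,q)\,A(n,m,q)+O(N^{-2021}),$$
with
$$A(n,m,q)=\sideset{}{^*}\sum_{\substack{a\,(\text{mod }q)\\ a\equiv m\bar p\,(\text{mod }q)}}e\!\left(-\frac{\bar a\, n}{q}\right).$$

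Second, I evaluate $A(n,m,q)$. The congruence $a\equiv m\bar p\pmod q$ pins $a$ down to a single residue class modulo $q$. Since $(p,q)=1$ by assumption, the coprimality condition $(a,q)=1$ from the starred sum is equivalent to $(m,q)=1$. On that single class one has $\bar a \equiv \bar m p\pmod q$, so
$$A(n,m,q)=e\!\left(-\frac{p\bar m\, n}{q}\right)\mathbbm{1}_{(m,q)=1}.$$
Substituting back produces exactly the exponential phase $e(-p\bar m n/q)$ and the coprimality restriction $(m,q)=1$ appearing in the proposition.

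Third, it remains to reconcile the prefactor $\tau_\chi/p$ obtained above with the $1/\sqrt p$ written in the proposition. Since $\chi$ is primitive modulo the prime $p$, we have $|\tau_\chi|=\sqrt p$, and therefore $\tau_\chi/p=(\tau_\chi/\sqrt p)\cdot(1/\sqrt p)$, where the first factor has modulus one. This unit-modulus complex constant plays no role in the subsequent estimates (which are all carried out in absolute value), so it can be absorbed into the implied constants, yielding the factor $N^{7/4}/(\sqrt p\, L)$ as stated. The argument is essentially a bookkeeping consequence of Lemmas 4.1 and 4.2; the only step requiring a touch of care is the collapse of the $a$-sum to a single residue class together with the transfer of the coprimality condition from $a$ to $m$.
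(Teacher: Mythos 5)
Your proposal is correct and follows exactly the route the paper intends: the paper itself offers no proof beyond the phrase ``From $(12)$ and $(14)$ we get that,'' i.e.\ direct substitution of Lemmas $4.1$ and $4.2$ into $(10)$, with the $a$-sum collapsing to the single class $a\equiv m\bar p\pmod q$ and $\tau_\chi/p$ replaced by $1/\sqrt p$ up to a unimodular constant. Your write-up in fact supplies the bookkeeping (transfer of the coprimality condition to $m$, the identity $\bar a\equiv \bar m p$, and the treatment of the cross error terms) that the paper leaves implicit.
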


\section{Further Estimation}\label{sec5}
\subsection{Applying the Cauchy and Poisson summation formulae} We choose the set of moduli $\Phi$ to be the product set $\Phi_{1}\Phi_{2}$, where $\Phi_{i}$ consists of primes in the dyadic segment $[Q_{i}, 2Q_{i}]$ (and not dividing $p$) for $i = 1, 2$ and $Q_1 Q_2 = Q = N^{1/2 + \theta}$. Also we pick $Q_1$ and $Q_2$ (whose optimal sizes will be determined later) so that the collections $\Phi_1$ and $\Phi_{2}$ are disjoint. Now consider $M_0 := \frac{pQ}{N}, N_0 := \frac{Q^2}{N}$. Here we note that, as $0<\theta < 1/10$ so that we have $Q_2 > N_0$ and also we have $Q_1 > N_0$. 

 Now applying the Cauchy-Schwarz inequality to the equation \eqref{eq15}, we arrive at
 \begin{equation}\label{eq16}
 \begin{split}
 \tilde{S}_{x}(N) \ll \frac{N^{7/4}\sqrt{M_0}}{\sqrt{p}L \sqrt{ Q_1 }} \sum_{q_1 \in \Phi_{1}} & \\
 &\hspace{-4cm} \times\left(\sum_{|m|\ll M_0}\left| \sum_{q_2 \in \Phi_2}\frac{\chi(q_2)}{q_{2}^{1/2}}\sum_{|n|\ll N_0}\frac{\lambda_{f}(n)}{n^{1/4}}\mathcal{I}_{1}(n,x,q_1 q_2)\mathcal{I}_{2}(m,x,q_1 q_2) e\left(-\frac{p\bar{m}n}{q_1 q_2}\right)\right|^2 \right)^{1/2} \\
 & \hspace{-4cm}\ll \frac{N^{7/4}\sqrt{M_0}}{\sqrt{p}L\sqrt{Q_1}}\sum_{q_1 \in \Phi_1}\Omega( N_0 , q_1 , Q_2 ,x)^{1/2},
 \end{split}
 \end{equation}

$\hspace{-0.39 cm}$where $\Omega ( N_0 , q_1 , Q_2 ,x)$ is defined as
\begin{equation}\label{sigma}
\sum_{\substack{|m|\ll M_0 \\ (m,q)=1}} \left|\sum_{q_2 \in \Phi_2}\frac{\chi(q_2)}{q_{2}^{1/2}}\sum_{|n|\ll N_0}\frac{\lambda_{f}(n)}{n^{1/4}}\mathcal{I}_{1}(n,x,q_1 q_2)\mathcal{I}_{2}(m,x,q_1  q_2) e\left(-\frac{p\bar{m}n}{q_1 q_2}\right)\right|^2.
 \end{equation}
 
 $\hspace{-0.38 cm}$Now we apply the Poisson summation formula to the $m$-sum with the modulus $q_1 q_2 q_{2}^{\prime}$ in the equation \eqref{sigma}. To this end, we first split the sum over $m$ into dyadic blocks $m\sim M_1$, $M_1 \ll M_0$ and then opening the absolute value square in the equation \eqref{sigma}, we get that,
$$\Omega ( N_0 , q_1 , Q_2 ,x) = \sum_{q_2 , q_{2}^{\prime}\in \Phi_2}\frac{\chi (q_2 \bar{q_{2}^{\prime}})}{(q_2 q_{2}^{\prime})^{1/2}}\sum_{|n|,|n|^\prime \ll N_0}\frac{\lambda_{f}(n)\lambda_{f}(n^\prime )}{(n n^\prime )^{1/4}}\mathcal{I}_{1}(n,x,q_1 q_2) \overline{\mathcal{I}_{1}(n^\prime ,x,q_1 q_{2}^{\prime})} \Delta ,$$
where 
$$\Delta = \sum_{M_1 \ll M_0}\sum_{m \in \mathbb{Z}}W^{\prime}\left(\frac{m}{M_1}\right)  e\left(\frac{\bar{m}p(q_{2}^{\prime}n - n^\prime q_2)}{q_1 q_2 q_{2}^{\prime}}\right) \mathcal{I}_{2}(m,x,q_1 q_2) \overline{\mathcal{I}_{2}(m,x,q_1 q_{2}^{\prime})},$$
$W^{\prime}(x)$ is a non-negative smooth function supported on $[ 2/3 , 3]$ with $W^{\prime}(x) = 1$ for $x \in [1,2]$ and $W^{\prime (j)}(x)\ll_j 1$.
 
 $\hspace{-0.39 cm}$Now applying the Poisson summation formula to the $m$-sum it transforms into
 $$\frac{M_1}{q_1 q_2 q_{2}^{\prime}}\sum_{m \in \mathbb{Z}}S(p(q_{2}^{\prime}n - n^\prime q_2),m; q_1 q_{2}q_{2}^{\prime})\mathcal{I}(m,x,q_1 ,q_2 ,q_{2}^{\prime}),$$
 where 
 $$\mathcal{I}(m,x,q_1 ,q_2 ,q_{2}^{\prime}):=\int_{\mathbb{R}}\, W^{\prime}(y)\, \mathcal{I}_{2}(M_1 y,x, q_1 q_2) \, \overline{\mathcal{I}_{2}(M_1 y , x , q_1 q_{2}^{\prime})}\, e\left(-\frac{m M_1 y }{q_1 q_2 q_{2}^{\prime}}\right) \, dy.$$
  Here note that the integral $\mathcal{I}$ is negligilbly small if $|m|\gg \frac{Q_1 Q_{2}^{2}}{M_1}N^\epsilon =\frac{Q_2 Q}{M_1}N^\epsilon$. 
  
  $\hspace{-0.39 cm}$Let $R_1 =\frac{Q_2 Q}{M_1}$. So we get
\begin{equation}\label{eq17}
\tilde{S}_{x}(N) = \frac{N^{7/4}\sqrt{M_0}}{\sqrt{p}L\sqrt{Q_1}}\sum_{q_1 \in \Phi_1}\Omega ( N_0 , q_1 , Q_2 , x)^{1/2} + O(N^{-2021}),
 \end{equation}
 where

 \begin{multline*}
 \hspace{-0.25cm}\Omega( N_0 , q_1 , Q_2 ,x)= \sum_{M_1 \ll M_0} \frac{M_1}{q_1 }\sum_{q_2 , q_{2}^{\prime} \in \Phi_2}\frac{\chi (q_2 \bar{q_{2}^{\prime}})}{(q_2 q_{2}^{\prime})^{3/2}}\sum_{|n|,|n|^\prime \ll N_0}\frac{\lambda_{f}(n)\lambda_{f}(n^\prime )}{(n n^\prime )^{1/4}}\\
 \times \mathcal{I}_{1}(n,x,q_1 q_2) \,  \overline{\mathcal{I}_{1}(n^{\prime},x,q_1 q_{2}^{\prime})} \sum_{|m|\ll R_1 }S(p(q_{2}^{\prime}n - n^\prime q_2),m; q_1 q_{2}q_{2}^{\prime})\mathcal{I}(m,x,q_1 ,q_2 ,q_{2}^{\prime}).
 \end{multline*}

\begin{remark}\label{re1}
Let us recall the Rankin-Selberg bound for Fourier coefficients. If $\lambda_{f}(n)$ be the normalised Fourier coefficients of a holomorphic cusp form, or of a Maass form $f$. Then for any real number $x \geq 1$, we have 
$$\mathop{\sum}_{1\leq n \leq x}\left| \lambda_{f}(n) \right|^2 \ll_{f}x .$$
Moreover, by the work of Deligne (see \cite{DEL}) and Deligne–Serre (see \cite{DP}) (the latter is for $k=1$), the Ramanujan conjecture for holomorphic cusp forms is now well-known:
$$\lambda_{f}(n)\ll n^\epsilon .$$
 \end{remark}
 
 \begin{lemma}\label{5.1}
 We have 
 \begin{equation}\label{eq18}
 \Omega( N_0 , q_1 , Q_2 ,x) \ll M_0 N_{0}^{1/2} + N_{0}^{3/2}Q_{2}^{2} \sqrt{Q_1}.
 \end{equation}
 
 \end{lemma}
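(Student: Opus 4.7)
The plan is to split the sum over $|m|\ll R_o$ inside $\Omega(N_o,q_1,Q_2,x)$ into the zero-frequency term $m=0$ and the nonzero-frequency terms $m\neq 0$, and handle the two contributions separately. These two pieces will yield the two terms $M_o N_o^{1/2}$ and $N_o^{3/2}Q_2^2\sqrt{Q_1}$ in the claimed bound, respectively.

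For the $m=0$ contribution, the Kloosterman sum $S(p(q_2'n-n'q_2),0;\,q_1q_2q_2')$ degenerates to a Ramanujan sum bounded by $\gcd(q_2'n-n'q_2,\,q_1q_2q_2')$, where I have used $(p,q_1q_2q_2')=1$. Under the hypothesis $0<\theta<1/10$ we have $N_o=N^{2\theta}$ and $Q=N^{1/2+\theta}$, so the modulus $q_1q_2q_2'\sim Q_1Q_2^2$ is much larger than $|q_2'n-n'q_2|\le 2Q_2N_o$. Hence any nonzero value of $q_2'n-n'q_2$ contributes only through a small gcd, and the dominant mass concentrates on the locus $q_2'n=n'q_2$. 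A standard divisor decomposition (writing $d=(q_2,q_2')$, $q_2=d\alpha$, $q_2'=d\beta$ with $(\alpha,\beta)=1$, whence $n=\alpha k$, $n'=\beta k$) then reduces this locus to the superdiagonal $q_2=q_2'$, $n=n'$, on which the Ramanujan sum equals $\phi(q_1q_2^2)\sim Q_1Q_2^2$. Trivially bounding the integrals by $1$ (per the preceding remark) and using $|\lambda_f(n)|\ll N^\epsilon$ together with $\sum_{|n|\ll N_o}n^{-1/2}\ll N_o^{1/2}$ yields
$$\frac{M_o}{q_1}\sum_{q_2\sim Q_2}\frac{\phi(q_1q_2^2)}{q_2^3}\sum_{|n|\ll N_o}\frac{|\lambda_f(n)|^2}{\sqrt{n}}\ll M_o N_o^{1/2}N^{\epsilon},$$
which is the first term of the target bound.

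For the $m\neq 0$ contribution, I apply the Weil bound
$$\bigl|S(p(q_2'n-n'q_2),m;\,q_1q_2q_2')\bigr|\ll (q_1q_2q_2')^{1/2+\epsilon},$$
bound the integrals trivially by $1$, and estimate everything else termwise. Using $\sum_{|n|\ll N_o}|\lambda_f(n)|/n^{1/4}\ll N_o^{3/4}$, the pair sum over $n,n'$ contributes $N_o^{3/2}N^\epsilon$; the pair $(q_2,q_2')$ contributes $Q_2^2$; the $m$-sum contributes $R_o$; and the Weil bound contributes $\sqrt{Q_1}\,Q_2$. Altogether,
$$\frac{M_o}{Q_1Q_2^3}\cdot Q_2^2\cdot N_o^{3/2}\cdot R_o\cdot \sqrt{Q_1}\,Q_2\cdot N^\epsilon=\frac{M_o R_o N_o^{3/2}}{\sqrt{Q_1}}N^\epsilon.$$
Since $M_o R_o=(pQ/N)(NQ_2/p)=QQ_2=Q_1Q_2^2$, this simplifies to $\sqrt{Q_1}\,Q_2^2 N_o^{3/2}N^\epsilon$, matching the second term.

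The main obstacle is the bookkeeping in the $m=0$ analysis: one must justify that both the partial-diagonal strata (where $(q_2,q_2')$ is nontrivial yet the superdiagonal $q_2=q_2'$, $n=n'$ fails) and the strictly nonzero-but-divisible configurations (where $q_2'n-n'q_2\neq 0$ yet $q_1q_2q_2'$ still shares a large factor with it) are bounded by $O(M_oN_o^{1/2}N^\epsilon)$. The size constraint $N_o\ll Q$ available under our hypothesis makes these reducible to standard divisor-counting estimates, with losses only in $N^\epsilon$. A secondary but minor technicality is the Weil bound when the first argument is not coprime to the modulus; here one uses the hybrid version $(q_1q_2q_2')^{1/2}\sqrt{(q_2'n-n'q_2,q_1q_2q_2')}$ and absorbs the average gcd into $N^\epsilon$.
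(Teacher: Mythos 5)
Your proposal is correct and follows essentially the same route as the paper: split off the zero frequency, where the Ramanujan sum forces (up to small-gcd corrections controlled by $N_o\ll Q_1,Q_2$) the diagonal $q_2'n=n'q_2$ and yields $M_oN_o^{1/2}$, and treat $m\neq 0$ via the Weil bound plus averaging the gcd over $m$ to get $R_oQ_2\sqrt{Q_1}$ and hence $N_o^{3/2}Q_2^2\sqrt{Q_1}$. The paper organizes the zero-frequency bookkeeping as an explicit case analysis over the divisors $d\mid q_1q_2q_2'$ rather than your gcd bound plus divisor decomposition, but the two are interchangeable and give the same bound.
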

 The proof of this lemma is given below. The first term of right hand side of \eqref{eq18} is coming from $m=0$ and the second term is coming from other $m$'s, i.e., for the terms with $m\neq 0$. For the proof, at first we consider the zero frequency case, i.e., when $m=0$.
 
 \subsection*{The zero frequency}The zero frequency $m=0$ has to be treated differently. Let $\Sigma_0$ denote the contribution of the zero frequency to $\tilde{S}_{x}(N)$, i.e.,
\begin{equation}\label{eq19}
\begin{split}
\Sigma_0 =\sum_{M_1 \ll M_0}\frac{M_1}{q_1 }\sum_{q_2 , q_{2}^{\prime} \in \Phi_2}\frac{\chi (q_2 \bar{q_{2}^{\prime}})}{(q_2 q_{2}^{\prime})^{3/2}} \sum_{|n|,|n|^\prime \ll N_o}\frac{\lambda_{f}(n)\lambda_{f}(n^\prime )}{(n n^\prime )^{1/4}} S(p(q_{2}^{\prime}n - n^\prime q_2),0; q_1 q_{2}q_{2}^{\prime})\\
& \hspace{-12cm}\times \mathcal{I}_{1}(n,x,q_1 q_2) \,  \overline{\mathcal{I}_{1}(n^{\prime},x,q_1 q_{2}^{\prime})} \, \mathcal{I}(0,x,q_1 ,q_2 ,q_{2}^{\prime})\\
& \hspace{-14cm}=\sum_{M_1 \ll M_0} \frac{M_1}{q_1 }\sum_{q_2 , q_{2}^{\prime} \in \Phi_2}\frac{\chi (q_2 \bar{q_{2}^{\prime}})}{(q_2 q_{2}^{\prime})^{3/2}}\sum_{|n|,|n|^\prime \ll N_0}  \frac{\lambda_{f}(n)\lambda_{f}(n^\prime )}{(n n^\prime )^{1/4}}\\
& \hspace{-14cm}\times \sum_{d | ( q_1 q_2 q_{2}^{\prime},p(q_{2}^{\prime}n - n^{\prime}q_2 ))}d \mu \left(\frac{p(q_{2}^{\prime}n - n^{\prime}q_2 )}{d}\right)\mathcal{I}_{1}(n,x,q_1 q_2) \,  \overline{\mathcal{I}_{1}(n^{\prime},x,q_1 q_{2}^{\prime})} \, \mathcal{I}(0,x,q_1 ,q_2 ,q_{2}^{\prime})\\
&\hspace{-14cm}=\sum_{M_1 \ll M_0}\frac{M_1}{q_1 }\sum_{q_2 , q_{2}^{\prime} \in \Phi_2}\frac{\chi (q_2 \bar{q_{2}^{\prime}})}{(q_2 q_{2}^{\prime})^{3/2}}\sum_{d | q_1 q_2 q_{2}^{\prime}}d \sum_{\substack{|n|,|n^{\prime}|\ll N_0 \\ q_{2}^{\prime}n - n^{\prime}q_2 \equiv 0 (\text{mod d})}}\frac{\lambda_{f}(n)\lambda_{f}(n^\prime )}{(n n^\prime )^{1/4}}\\
& \hspace{-12cm}\times \mathcal{I}_{1}(n,x,q_1 q_2)\,  \overline{\mathcal{I}_{1}(n^\prime,x,q_1 q_{2}^{\prime})}\, \mathcal{I}(0,x,q_1 ,q_2 ,q_{2}^{\prime}).
\end{split}
\end{equation}
\begin{lemma}\label{5.2} We have
\begin{equation}\label{eq20}
\Sigma_0 \ll M_0 N_{0}^{1/2}.
\end{equation}
\end{lemma}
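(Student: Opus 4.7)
The plan is to exploit the Ramanujan-sum structure of the $m=0$ contribution to bound $|S(p(q_2' n - q_2 n'), 0; q_1 q_2 q_2')|$ by a product of $\gcd$'s that separates in the variables $n$ and $n'$; the resulting one-variable divisor sums are then controlled by a splitting argument. Throughout, I shall bound the oscillatory integrals $\mathcal{I}_1, \bar{\mathcal{I}}_1, \mathcal{I}$ trivially by $1$ as in Remark 2, and use the bound $|\lambda_f(n)| \ll N^\epsilon$.

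Since the second argument of the Kloosterman sum vanishes, $S(p(q_2' n - q_2 n'), 0; q_1 q_2 q_2')$ equals the Ramanujan sum $c_{q_1 q_2 q_2'}(p(q_2' n - q_2 n'))$, and $(p, q_1 q_2 q_2') = 1$ yields
$$|S(p(q_2' n - q_2 n'), 0; q_1 q_2 q_2')| \leq \gcd(q_2' n - q_2 n', q_1 q_2 q_2').$$
Set $X = q_2' n - q_2 n'$. Because $(q_1, q_2 q_2') = 1$, the gcd factors as $\gcd(X, q_1) \cdot \gcd(X, q_2 q_2')$, and I bound $\gcd(X, q_1) \leq q_1$ trivially. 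In the principal case $\gcd(q_2, q_2') = 1$, the identity $\gcd(q_2' n - q_2 n', q_2) = \gcd(q_2' n, q_2) = \gcd(n, q_2)$ (together with its analogue for $q_2'$) produces
$$|S| \leq q_1 \gcd(n, q_2) \gcd(n', q_2'),$$
whose factor $q_1$ cancels the $1/q_1$ appearing outside the sum in (19) and which separates in $(n, q_2)$ versus $(n', q_2')$.

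The one-variable sums are evaluated by splitting on $d = \gcd(n, q_2)$ and writing $n = d m$ with $(m, q_2/d) = 1$:
$$\sum_{n \ll N_o} n^{-1/4} \gcd(n, q_2) \leq \sum_{d \mid q_2} d^{3/4} \sum_{m \ll N_o/d} m^{-1/4} \ll N_o^{3/4} \tau(q_2) \ll N_o^{3/4} N^\epsilon.$$
Multiplying the $n$- and $n'$-sums together, summing against the weight $(q_2 q_2')^{-3/2} \asymp Q_2^{-3}$ over the $\sim Q_2^2$ pairs $(q_2, q_2')$, and collecting the $M_o$, one obtains
$$\Sigma_o \ll \frac{M_o N^\epsilon}{Q_2^3} \cdot Q_2^2 \cdot N_o^{3/2} = \frac{M_o N_o^{3/2} N^\epsilon}{Q_2} = M_o N_o^{1/2} N^\epsilon \cdot \frac{N_o}{Q_2},$$
which is $\ll M_o N_o^{1/2} N^\epsilon$ by the standing hypothesis $N_o < Q_2$, valid under $0 < \theta < 1/10$.

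The main obstacle is the non-coprime case $g = \gcd(q_2, q_2') > 1$: then $q_1 q_2 q_2'$ is no longer squarefree and the clean factorisation of $\gcd(X, q_2 q_2')$ fails. Writing $q_2 = g a$, $q_2' = g b$ with $(a, b) = 1$, one has $X = g(b n - a n')$ and $\gcd(X, q_2 q_2') = g \gcd(b n - a n', g a b)$, which carries an extra factor $\gcd(b n - a n', g)$ depending on $(n, n')$. I plan to handle this dyadically in $g$, exploiting the fact that pairs $(q_2, q_2') \sim Q_2$ with $\gcd = g$ have density $\sim (Q_2/g)^2$ and using an average bound on $\gcd(b n - a n', g)$ to show that the total contribution from $g > 1$ is subsumed by the coprime case up to factors of $N^\epsilon$, yielding the desired bound.
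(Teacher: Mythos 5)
Your opening move coincides with the paper's: interpret $S(p(q_2'n-q_2n'),0;q_1q_2q_2')$ as a Ramanujan sum and reduce to gcd/divisibility counting, with the oscillatory factors bounded trivially. Your handling of the coprime case $(q_2,q_2')=1$ is complete and correct, and in fact cleaner than the paper's six-case divisor analysis (which implicitly treats $q_1,q_2,q_2'$ as if they were coprime primes): the factorization $\gcd(X,q_1)\gcd(X,q_2)\gcd(X,q_2')\le q_1\gcd(n,q_2)\gcd(n',q_2')$, the divisor-split evaluation of $\sum_n n^{-1/4}\gcd(n,q_2)\ll N_o^{3/4}\tau(q_2)$, and the final use of $N_o<Q_2$ are all sound and yield $M_oN_o^{3/2}N^{\epsilon}/Q_2\ll M_oN_o^{1/2}N^\epsilon$.

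The gap is the case $g=\gcd(q_2,q_2')>1$, which you leave as a plan and, more importantly, mischaracterize: it cannot be "subsumed by the coprime case," because it contains the diagonal $q_2=q_2'$, $n=n'$, where $X=0$ and $S(0,0;q_1q_2^2)=\phi(q_1q_2^2)\asymp q_1q_2^2$. That diagonal alone contributes
\begin{equation*}
\frac{M_o}{q_1}\sum_{q_2\sim Q_2}\frac{q_1q_2^2}{q_2^{3}}\sum_{n\ll N_o}\frac{|\lambda_f(n)|^2}{n^{1/2}}\asymp M_o N_o^{1/2},
\end{equation*}
i.e.\ the full size of the claimed bound, strictly larger than the $M_oN_o^{3/2}/Q_2$ you obtain from the coprime pairs. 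So the dominant term of the lemma lives precisely in the part you have not proved, and an "average bound on $\gcd(bn-an',g)$" asserted to be small cannot be correct as stated. To close the argument you need the diagonal extraction that the paper performs in its Cases 1, 5, 6: for divisors $d$ of $q_1q_2q_2'$ with $d\gg Q_2N_o$, the condition $d\mid q_2'n-q_2n'$ together with $|q_2'n-q_2n'|\ll Q_2N_o$ forces the exact equality $q_2'n=q_2n'$, hence (writing $q_2=ga$, $q_2'=gb$ with $(a,b)=1$) $a\mid n$ and $b\mid n'$; counting these solutions is exactly what produces the boundary term $M_oN_o^{1/2}$. Your sketch for $g>1$ must carry out this step explicitly (and then verify that the off-diagonal solutions with $a$ or $b>1$ contribute no more), rather than appeal to the coprime computation.
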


\begin{proof}
 For $m=0$ we have six cases according to the divisors of $q_1 q_2 q_{2}^{\prime}$ and note that $Q= Q_1 Q_2$.

\subsubsection*{Case 1 }Let $d=q_1 q_2 q_{2}^{\prime}$. Then note that size of $d$ for this case is $Q_1 Q_{2}^{2}$. But size of $q_{2}^{\prime}n - n^{\prime}q_2$ is $Q_2 N_0$. So for this case 
$$q_{2}^{\prime}n - n^{\prime}q_2 \equiv 0 (\text{mod } d) \iff q_2 = q_{2}^{\prime} \text{ and }n= n^{\prime},$$
 as size of $n$ is smaller than size of $Q_2$. Hence we have, using the well known pointwise Ramanujan bound, given in remark \ref{re1},
$$\Sigma_{0} \ll \sup_{M_1 \ll M_0}M_1 N_{0}^{1/2} ,$$
as there are atmost $\log M_0 \, (\ll p^\epsilon )$ many $M_1$'s. Hence we have
$$\Sigma_{0} \ll M_0 N_{0}^{1/2} .$$

\subsubsection*{Case 2}Let $d=1$. Then we get that, as done in the previous case,
$$\Sigma_{0} \ll \sup_{M_1 \ll M_0}\frac{M_1 N_{0}^{3/2}}{Q} \ll \frac{M_0 N_{0}^{3/2}}{Q}.$$
 But as $N_0 \ll Q$ so for this case, we must have,  
$$\Sigma_{0} \ll M_0 N_{0}^{1/2}.$$
\subsubsection*{Case 3}Let $d= q_1$. For this case, we have, 
$$\Sigma_{0}\ll \sup_{M_1 \ll M_0}\frac{M_{1}N_{0}^{3/2}}{Q_2} \ll \frac{M_{0}N_{0}^{3/2}}{Q_2}.$$
 But as $N_0 < Q_2$ so that for this case again we have,
 $$\Sigma_{0} \ll M_0 N_{0}^{1/2}.$$
\subsubsection*{Case 4}Now consider $d= q_2$. For this case we have,  
$$\Sigma_{0}\ll \sup_{M_1 \ll M_0}\frac{M_1 N_{0}^{3/2}}{Q_1} \ll \frac{M_0 N_{0}^{3/2}}{Q_1}.$$
 But as $N_0 < Q_1$ so for this case, we must have,  
 $$\Sigma_{0} \ll M_0 N_{0}^{1/2}.$$

$\hspace{-0.39 cm}$For the case $d= q_{2}^{\prime}$ we have to process similarly and we will get the same bound.
\subsubsection*{Case 5 }\label{5} Now take $d= q_1 q_2$. But as size of $q_{2}^{\prime}n - n^{\prime}q_2$ is $Q_2 N_0$ which is less than the size of $d$, i.e., $Q_1 Q_2$ so for this case we have 
$$q_{2}^{\prime}n - n^{\prime}q_2 \equiv 0 (\text{mod d}) \iff q_2 = q_{2}^{\prime} \text{ and }n= n^{\prime}.$$
 Hence we have 
 $$\Sigma_{0}\ll \sup_{M_1 \ll M_0}\frac{M_1 N_{0}^{1/2}}{Q_2} \ll \frac{M_0 N_{0}^{1/2}}{Q_2}.$$
  But then again we have, for this case, 
  $$\Sigma_{0} \ll M_0 N_{0}^{1/2} .$$
For $d= q_{1}q_{2}^{\prime}$ if we process similarly then we shall get the same bound.
\subsubsection*{Case 6 }For the last case we have $d= q_{2}^{2}$. This case will be similar as case $5$.  By considering the size of $d$ for this case again we can say that 
$$q_{2}^{\prime}n - n^{\prime}q_2 \equiv 0 (\text{mod d}) \iff q_2 = q_{2}^{\prime} \text{ and }n= n^{\prime}.$$
 So for this case we have 
 $$\Sigma_{0} \ll \sup_{M_1 \ll M_0}\frac{M_1 N_{0}^{1/2}}{Q_1}\ll \frac{M_0 N_{0}^{1/2}}{Q_1}.$$
  Hence we have, for this case, 
  $$\Sigma_{0} \ll M_0 N_{0}^{1/2}.$$
This completes the proof of the Lemma \ref{5.2}.
\end{proof}

\subsection*{Non-zero frequency}Now we will consider the non-zero frequency case, i.e., when $m\neq 0$. In this case we will need the following basic lemma:
\begin{lemma}\label{eq100}
For any $x,y \in \mathbb{R} \text{ with }x,y \geq 1$ and $c\in \mathbb{N}$, we have,
$$\sum_{1\leq a\leq x}\sum_{1\leq b\leq y}(a,b,c)\leq (xy)^{1+ \epsilon} .$$
\end{lemma}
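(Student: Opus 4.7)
My plan is to open up the triple gcd via the Euler identity $(a,b,c)=\sum_{d\mid(a,b,c)}\phi(d)$, interchange the order of summation so that $d$ is the outer variable, and recover a single divisor sum with floor factors:
$$\sum_{1\le a\le x}\sum_{1\le b\le y}(a,b,c)=\sum_{d\mid c}\phi(d)\left\lfloor\frac{x}{d}\right\rfloor\left\lfloor\frac{y}{d}\right\rfloor.$$
This first step is purely formal and is essentially the only starting point; the gcd identity converts the double sum into a weighted count of simultaneous multiples of $d$ in the ranges $[1,x]$ and $[1,y]$.

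Next, I would truncate the divisor sum to the effective range $d\le\min(x,y)$, since any larger $d\mid c$ kills one of the two floor factors. For each remaining $d$ the plan is to use the sharp integer identity $d\lfloor x/d\rfloor=x-\{x/d\}\,d$ (and its analogue for $y$) to split each summand as an ``ideal'' main piece $xy\cdot\phi(d)/d^{2}$ minus an explicit non-negative fractional-parts correction $\phi(d)\bigl(xy/d^{2}-\lfloor x/d\rfloor\lfloor y/d\rfloor\bigr)$. One then reinterprets the main-piece sum $xy\sum_{d\mid c}\phi(d)/d^{2}$ via its Euler product, and the central aim is to show, prime by prime (using that $\phi(d)/d^{2}$ is multiplicative in $d$), that the excess of this factor over $1$ is precisely absorbed by the fractional-parts correction coming from the replacement of $x/d,y/d$ by their integer parts.

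The main obstacle, where I expect the bulk of the effort to lie, is making the local cancellation robust at each prime $p\mid c$ uniformly in $x$ and $y$. The plan is an induction on the number of distinct prime divisors $\omega(c)$: the base case $c=1$ reduces to the elementary inequality $\lfloor x\rfloor\lfloor y\rfloor\le xy$; the inductive step would separate divisors of $c$ containing a given prime $p\mid c$ from those that do not, and invoke the sharp $p$-adic floor bound $\lfloor u/p\rfloor\le u/p$ (together with its $p^{j}$-analogues) to extract the required prime-local savings. If the induction closes uniformly in the parameters, collecting the resulting local inequalities yields $\le xy$ directly, giving the stated bound.
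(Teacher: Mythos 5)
Your opening step is correct, and in fact it already exposes the real problem: the stated lemma is false. The identity $(a,b,c)=\sum_{d\mid(a,b,c)}\phi(d)$ together with the interchange of summation gives the \emph{exact} value
$$\sum_{1\le a\le x}\sum_{1\le b\le y}(a,b,c)=\sum_{d\mid c}\phi(d)\lfloor x/d\rfloor\lfloor y/d\rfloor ,$$
and already $c=2$, $x=y=2$ yields $\phi(1)\cdot 2\cdot 2+\phi(2)\cdot 1\cdot 1=5>4=xy$. Consequently the ``absorption'' you plan for the second half cannot be carried out: whenever $x$ and $y$ are integers divisible by every prime power dividing $c$, the fractional-parts corrections in your decomposition vanish identically while the main term equals $xy\sum_{d\mid c}\phi(d)/d^2>xy$. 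No induction on $\omega(c)$, and no clever use of $\lfloor u/p\rfloor\le u/p$, can close an inequality that is simply not true. This is the genuine gap in your plan, and it is unavoidable: your first display is an identity, so the would-be inequality is equivalent to $\sum_{d\mid c}\phi(d)\lfloor x/d\rfloor\lfloor y/d\rfloor\le xy$, which the counterexample contradicts.

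For comparison, the paper's own proof is also fallacious, but for a different reason. The first step $(a,b,c)\le(a,b)$ is already too lossy, since $\sum_{a\le x}\sum_{b\le y}(a,b)$ is of size $xy\log\min(x,y)$, not $O(xy)$; and the displayed step $\sum_{a\le x}\sum_{b\le y}(a,b)\le\sum_{d\le x}d\sum_{b\le y/d}1$ silently drops the count of admissible $a$'s for each value $d=(a,b)$ (the correct reshuffle gives $\sum_{d\le\min(x,y)}d\lfloor x/d\rfloor\lfloor y/d\rfloor$, which is $\asymp xy\log\min(x,y)$). So both proofs target, by entirely different routes, the same false bound.

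What is true, and what the application actually needs, is the weaker
$$\sum_{1\le a\le x}\sum_{1\le b\le y}(a,b,c)\le xy\sum_{d\mid c}\frac{\phi(d)}{d^2}\le xy\prod_{p\mid c}\Bigl(1+\frac1p\Bigr)\ll xy\log\log(3c)\ll_\epsilon xy\,c^{\epsilon},$$
which is an immediate consequence of your exact identity. Since the lemma is only used to establish $\sum_{1\le|m|\ll R_o}S(\cdot,m;q_1q_2q_2')\ll R_oQ_2\sqrt{Q_1}$ with $\ll$ already absorbing $(pqN)^\epsilon$-factors, this corrected statement suffices for the paper. I would therefore discard the planned induction, keep only your first identity, and conclude with the one-line computation above.
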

\begin{proof}
Let $(a,b)=d$ so that $d \geq 1$. Then we have
$$\sum_{1\leq a\leq x}\sum_{1\leq b\leq y}(a,b,c)\leq \sum_{1\leq a\leq x}\sum_{1\leq b\leq y}(a,b) \leq \left( \sum_{1\leq d\leq x}d\sum_{1\leq b\leq \frac{y}{d}}1 \right) (xy)^{\epsilon} = (xy)^{1+ \epsilon} .$$
This completes the proof of this lemma.
\end{proof}
Here the contribution of the non-zero frequency to $\tilde{S}_{x}(N)$ is given by the following:
\begin{equation}\label{eq21}
\begin{split}
\Sigma_{\neq 0} =\sum_{M_1 \ll M_0}\frac{M_1}{q_1 }\sum_{q_2 , q_{2}^{\prime} \in \Phi_2}\frac{\chi (q_2 \bar{q_{2}^{\prime}})}{(q_2 q_{2}^{\prime})^{3/2}} \sum_{|n|,|n|^\prime \ll N_0}\frac{\lambda_{f}(n)\lambda_{f}(n^\prime )}{(n n^\prime )^{1/4}} \mathcal{I}_{1}(n,x,q_1 q_2) \overline{\mathcal{I}_{1}(n^\prime ,x,q_1 q_{2}^{\prime})}\\
& \hspace{-10cm}\times \sum_{1 \leq |m| \ll R_1}S(p(q_{2}^{\prime}n - n^\prime q_2),m; q_1 q_{2}q_{2}^{\prime})\mathcal{I}(m,x,q_1 ,q_2 ,q_{2}^{\prime}).
\end{split}
\end{equation}
By the Weil's bound for Kloosterman sums we arrive at
\begin{multline*}
\sum_{1\leq |m|\ll R_1 }S(p(q_{2}^{\prime}n - n^\prime q_2),m; q_1 q_{2}q_{2}^{\prime})\mathcal{I}(m,x,q_1 ,q_2 ,q_{2}^{\prime}) \\
 \ll Q_2 \sqrt{Q_1}\sum_{1\leq |m|\ll R_1}(p(q_{2}^{\prime}n - n^\prime q_2),m; q_1 q_{2}q_{2}^{\prime})^{1/2}.
\end{multline*}

$\hspace{-0.39 cm}$Then by the previous lemma, we have, 
$$\sum_{1 \leq |m|\ll R_1}(p(q_{2}^{\prime}n - n^\prime q_2),m; q_1 q_{2}q_{2}^{\prime})^{1/2} \ll R_{1}^{1+\epsilon} .$$
Hence we get that
\begin{equation}\label{22}
\sum_{|m|\ll R_1 }S(p(q_{2}^{\prime}n - n^\prime q_2),m; q_1 q_{2}q_{2}^{\prime})\mathcal{I}(m,x,q_1 ,q_2 ,q_{2}^{\prime})\ll R_{1}^{1+\epsilon}Q_2 \sqrt{Q_1}.
\end{equation}

$\hspace{-0.39 cm}$Now putting values of $R_1$ we get that, using the well known pointwise Ramanujan bound, given in Remark \ref{re1},
$$\Sigma_{\neq 0} \ll \sup_{M_1 \ll M_0}\frac{M_1}{Q_1}\times \frac{1}{Q_2}\times N_{0}^{3/2}\times \frac{Q_2 Q}{M_1}\times Q_2 \sqrt{Q_1},$$
as there are atmost $\log M_0 \, (\ll p^\epsilon )$ many $M_1$'s. So we have,
$$\Sigma_{\neq 0} \ll N_{0}^{3/2}Q_{2}^{2}\sqrt{Q_1}.$$

$\hspace{-0.39 cm}$This completes the proof of the Lemma \ref{5.1}.

\section{Final estimation}$\hspace{-0.39 cm}$From the equations \eqref{eq17} and \eqref{eq18} we get that
\begin{equation}\label{23}
\begin{split}
\tilde{S}_{x}(N) & \ll  \frac{N^{7/4}\sqrt{M_0}}{\sqrt{p}L\sqrt{Q_1}}\sum_{q_1 \sim Q_1}\left(  M_0 N_{0}^{1/2} + N_{0}^{3/2}Q_{2}^{2} \sqrt{Q_1} \right)^{1/2} \\
& \ll \frac{N^{7/4}\sqrt{Q_1 M_0}}{\sqrt{p}L }\left(  M_{0}^{1/2} N_{0}^{1/4} + N_{0}^{3/4}Q_{2} Q_{1}^{1/4} \right).
\end{split}
\end{equation}

Now the optimal choice of $Q_1$ is obtained by equating the two terms of the equation \eqref{eq18} and using the relations $Q_1 Q_2 =Q= N^{1/2 +\theta}$, $N_0 = \frac{Q^2}{N} \text{ and } M_0 = \frac{pQ}{N}$, so that we have
\begin{equation}\label{eq24}
Q_1 =\frac{N^{1+ 2\theta}}{p^{2/3}}.
\end{equation}
This satisfies our requirement that $\frac{p}{N}< Q_2 < N^{1/2 - 3\theta}$. Now putting this value of $Q_1$ in the equation \eqref{23}, we get that
$$\tilde{S}_{x}(N) \ll N^{3/4 + \theta /2}p^{1/6}.$$ 
Therefore we have
\begin{equation}\label{eq25}
S(N) \ll N^{3/4 + \theta /2}p^{1/6} + N^{1-\theta}
\end{equation}
with $0<\theta < 1/10$. We choose $\theta = \frac{1}{6}-\frac{\log p}{9 \log N}$
 so that 
 \begin{equation}\label{eq26}
 N^{3/4 + \theta /2}p^{1/6} = N^{1-\theta}
 \end{equation}
  Our $\theta $ will satisfy the condition $0< \theta < \frac{1}{10}$ if $p^{2/3 +\alpha + \epsilon}< N < p$ where $\alpha = \frac{4\theta}{1- 6\theta}$ which is fine.
This completes the proof of the Theorem \ref{th}.

\
{}

\end{document}